\documentclass[a4paper, 10pt, american]{amsart}
\usepackage{geometry}
\usepackage{times,latexsym,amssymb}
\usepackage{amsmath,amsthm,bm}
\usepackage{color}
\usepackage[colorlinks,pdfpagelabels,pdfstartview = FitH,bookmarksopen
= true,bookmarksnumbered = true,linkcolor = blue,plainpages =
false,hypertexnames = false,citecolor = red,pagebackref=false]{hyperref}
\usepackage{soul}
\usepackage{microtype} 
\usepackage{amsbsy}
\usepackage{amstext}
\usepackage{amssymb}
\usepackage{esint}
\usepackage{stmaryrd}
\usepackage[pdftex]{graphicx}
\usepackage{floatflt}
\usepackage{comment}
\usepackage{cancel}

\allowdisplaybreaks
\newtheorem{mytheorem}{Theorem}
\newtheorem{mylem}[mytheorem]{Lemma}
\newtheorem{mydef}[mytheorem]{Definition}

\newtheorem{mycor}[mytheorem]{Corollary}
\newtheorem{remark}[mytheorem]{Remark}
\numberwithin{mytheorem}{section}
\numberwithin{equation}{section}

\SetSymbolFont{stmry}{bold}{U}{stmry}{m}{n}

\usepackage{schemata}
\usepackage{tikz}
\usetikzlibrary{decorations.pathreplacing,calc,tikzmark}
\usetikzlibrary{matrix,decorations.pathreplacing}

\usepackage{blindtext}

\newcommand{\uproman}[1]{\uppercase\expandafter{\romannumeral#1}}
\usepackage[utf8]{inputenc}
\usepackage{mathtools}

\DeclareMathOperator\divv{div}

\usepackage{mathrsfs}
\usepackage{yfonts}
\usepackage{braket}
\usepackage{bbm}

\usepackage{paralist}
\usepackage{pifont}
\usepackage{enumerate}

\usepackage{graphicx}
\newcommand{\bigchi}{\scalebox{1.3}{$\chi$}}

\makeatletter
\DeclareRobustCommand*{\bfseries}{%
  \not@math@alphabet\bfseries\mathbf
  \fontseries\bfdefault\selectfont
  \boldmath
}
\makeatother

\allowdisplaybreaks
\numberwithin{mytheorem}{section}
\numberwithin{equation}{section}

\hypersetup{linkcolor=blue,linktoc=page}
\usepackage{titletoc}

\def\YYint#1#2#3{{\setbox0=\hbox{$#1{#2#3}{\iint}$}
    \vcenter{\hbox{$#2#3$}}\kern-.51\wd0}}

\def\XXint#1#2#3{{\setbox0=\hbox{$#1{#2#3}{\int}$ }
\vcenter{\hbox{$#2#3$ }}\kern-0.555\wd0}}

\newcommand{\vertiii}[1]{{\left\vert\kern-0.25ex\left\vert\kern-0.25ex\left\vert #1 
    \right\vert\kern-0.25ex\right\vert\kern-0.25ex\right\vert}}

\renewcommand{\d}{\mathrm{d}}
\newcommand{\dx}{\mathrm{d}x}
\newcommand{\dy}{\mathrm{d}y}
\newcommand{\dz}{\mathrm{d}z}
\newcommand{\dt}{\mathrm{d}t}
\newcommand{\ds}{\mathrm{d}s}

\newcommand{\R}{\mathbb{R}}

\renewcommand{\epsilon}{\varepsilon}

\renewcommand{\d}{\mathrm{d}}

\renewcommand{\epsilon}{\varepsilon}

\subjclass[2020]{35B51, 35D30, 35R11}
\keywords{Doubly nonlinear parabolic PDEs, Fractional equations, Weak solutions, Comparison principle}

\begin{document}
\title[Comparison principle for doubly nonlinear parabolic fractional equations]{A comparison principle for a class of doubly nonlinear parabolic fractional partial differential equations}
\date{\today}


\author[M. Strunk]{Michael Strunk}
\address{Michael Strunk \\
Fachbereich Mathematik, Universit\"at Salzburg\\
Hellbrunner Str. 34, 5020 Salzburg, Austria}
\email{michael.strunk@plus.ac.at}


\begin{abstract} In this paper, we establish a comparison principle for non-negative weak solutions to a class of doubly nonlinear parabolic fractional partial differential equations within a space-time cylinder~$\Omega_T=\Omega\times(0,T)\subset\R^{n+1}$. For the two solutions considered, we assume that at least one of them is time-independent outside the spatial domain, i.e. in~$\Omega^{c}=\R^n\setminus\Omega$. As an application of this result, we readily infer the uniqueness of a non-negative weak solution to the corresponding Cauchy-Dirichlet problem. 
\end{abstract}


\maketitle
\vspace{-0.5cm}
\tableofcontents


\section{Introduction and main results}
We consider non-negative weak solutions to doubly nonlinear parabolic fractional partial differential equations of the type
\begin{align} \label{pde}
    \partial_t u^q - \mathcal{L}^s_{p,K}u = f \qquad\mbox{in~$\Omega_T$}, 
\end{align}
where~$s\in(0,1)$,~$p>1$, and~$q>0$. Here,~$\Omega_T\coloneqq\Omega\times(0,T)$ denotes a space-time cylinder taken over a bounded spatial domain~$\Omega\subset\R^n$ ($n\geq 2$) and some~$T>0$. For the inhomogeneity of equation~\eqref{pde}, we assume that it is time-independent and satisfies~$f\in L^{\sigma}(\Omega)$, where~$\widetilde{p}\coloneqq\min\{p,q+1\}$ and~$\sigma\coloneqq \frac{\widetilde{p}}{\widetilde{p}-1}$ denotes the Hölder conjugate of~$\widetilde{p}$. 
Moreover,~$\mathcal{L}^s_{p,K}$ denotes the nonlocal operator of equation~\eqref{pde} that is of fractional~$p$-Laplace-type and is given for a function~$u\colon\R^n\times(0,T)\to\R$ as
\begin{align} \label{kernel}
    \mathcal{L}^s_{p,K}u(x,t) \coloneqq 2\,\mbox{P.V.}\int_{\R^n} |u(x,t)-u(y,t)|^{p-2} (u(x,t)-u(y,t)) K(x,y)\, \dy
\end{align}
for~$(x,t)\in\R^n\times(0,T)$. Here, P.V. denotes the principle value of the integral. Moreover, the kernel~$K\colon \R^n\times\R^n\to[0,\infty)$ denotes a measurable function that satisfies the growth and symmetry condition
\begin{align} \label{kernelgrowth}
    \frac{\Lambda^{-1}}{|x-y|^{n+sp}} \leq K(x,y)=K(y,x) \leq \frac{\Lambda}{|x-y|^{n+sp}} 
\end{align}
for a.e.~$(x,y)\in \R^n\times\R^n$, with a positive structure constant~$\Lambda\geq 1$. 

The primary goal of this paper is to establish a comparison principle for non-negative weak solutions to equation \eqref{pde}, which is formalized in our main theorem, Theorem~\ref{hauptresultat}. Roughly speaking, the comparison principle in the local setting states that two solutions~$u,v$, defined in~$\Omega_T$, which satisfy~$u\leq v$ on the parabolic boundary~$\partial_p\Omega_T = \big(\overline{\Omega}\times\{0\} \big) \cup \big(\partial\Omega\times(0,T) \big)$, must have the same property in the entire space-time cylinder~$\Omega_T$, i.e. there holds~$u\leq v$ a.e. in~$\Omega_T$. Even though it is generally understood to be a rather simple property, the comparison principle for the local doubly nonlinear equation is still far from being fully understood; cf. Section~\ref{subsec:literature} for a more detailed analysis. In the nonlocal setting, the boundary condition must be adapted to the requirement that, for any two solutions~$u,v$ in~$\Omega_T$, the relation~$u\leq v$ holds a.e. in~$\Omega^c\times(0,T)$, where~$\Omega^c=\R^n\setminus\Omega$. Unsurprisingly, the nonlocal case - particularly the doubly nonlinear parabolic fractional equation \eqref{pde} - is even less well-understood than the local counterpart, and there are scarce results concerning a comparison principle. The main objective of this article is to address this gap in the existing literature.


\subsection{Literature overview} \label{subsec:literature}
Comparison principles in the local case for doubly nonlinear parabolic partial differential equations
\begin{align} \label{pdelocal}
    \partial_t u^q - \divv |Du|^{p-2}Du = 0 \qquad\mbox{in $\Omega_T$} 
\end{align} 
have been treated by a number of authors. It is important to note, however, that all existing comparison principles to date rely on additional assumptions that stem from the power-type nonlinearity~$q>0$ present in the evolutionary term of the equation. When~$q=1$, the result naturally follows from standard arguments without the need for further assumptions. Interestingly, the underlying proof does not depend on linear diffusion and remains essentially the same even in the nonlinear case where~$p\neq 2$. Bamberger \cite{Bamberger} proved a comparison principle for weak solutions to doubly nonlinear equations under the additional assumption that~$\partial_t u^q, \partial_t v^q \in L^1(\Omega_T)$. 
Alt and Luckhaus \cite{existence} obtained a comparison principle for weak sub- and super-solutions under a similar condition, assuming that there holds $(\partial_t u^q - \partial_t v^q) \in L^1(\Omega_T)$. Diaz \cite{Diaz} requires an additional assumption on the time derivative that is once again comparable to the aforementioned results. In particular, these assumptions are quite restrictive, since the definition of a weak solution does not include the existence of a weak time derivative and certain mollification methods have to be used in general. 

Otto~\cite{otto1996l1} employed an alternative proof technique to establish a comparison principle for weak sub- and super-solutions with time-independent boundary data on the lateral boundary. His method relies on a doubling of the time variable, a strategy also utilized in~\cite{bogelein2023h}. In this manuscript, we will adapt and extend this approach accordingly. 

Ivanov, Mkrtychan, and Jäger \cite{ivanov1997} followed yet another approach. They considered the case where $q\in(0,1]$ and $p\in (1,2)$ and proved a comparison principle for bounded and strictly positive sub- and super-solutions, i.e. they assumed the infimum of $u$ and $v$ to be strictly positive in the entire space-time domain $\Omega_T$. Their result was subsequently extended by Ivanov \cite{Ivanov_1995} to the range $q\in(0,1]$ and $p>1$. Lindgren and Lindqvist \cite{comparison} proved a comparison principle under similar additional assumptions for Trudinger's equation, i.e. the case where $q=p-1$. Recently, Anttila, Lindqvist, and Parviainen \cite{anttila2026uniqueness} also derived a comparison principle for sign-changing solutions
to Trudinger’s equation with continuous lateral boundary data under an additional technical assumption. The entire parameter space~$p>1$ and~$q>0$ was finally addressed by Bögelein and the author in \cite{bogelein2024comparison}, where a lower bound condition for the super-solution was imposed solely on the lateral boundary rather than throughout the entire space-time domain. 

In the nonlocal setting, even fewer results are available. Vázquez~\cite{vazquez2016dirichlet} established an~$L^1$-contraction property for solutions to the Cauchy-Dirichlet problem involving the fractional parabolic~$p$-Laplace equation, under homogeneous Dirichlet conditions in~$\Omega^c$ and with non-negative initial data. Similarly, Bonforte and Vázquez~\cite{bonforte2015priori} studied the Cauchy-Dirichlet problem for solutions to the fractional porous medium equation, also assuming homogeneous Dirichlet conditions in~$\Omega^c$. In their work, they proved a weighted~$L^1$-contraction property. To the best of our knowledge, there are no existing results concerning a comparison principle for the doubly nonlinear fractional parabolic equation~\eqref{pde}. Specifically, we aim to establish such a principle for more general Dirichlet boundary data, beyond the homogeneous case. 


\subsection{Definition of weak solution} \label{sec:weaksolution}
For $s\in(0,1)$ and a subset $\Omega \subset \R^n$, we introduce the fractional Sobolev space
$W^{s,p}(\Omega)$ defined by
$$ W^{s,p}(\Omega)
\coloneqq \bigg\{
v\in L^p(\Omega) \colon
\iint_{\Omega \times \Omega}
\frac{|v(x)-v(y)|^p}{|x-y|^{n+sp}}\dx\dy < \infty
\bigg\}, $$
which is equipped with the norm
$$ \|v\|_{W^{s,p}(\Omega)}
\coloneqq \bigg(\int_{\Omega} |v|^p\dx\bigg)^{\frac{1}{p}}
+ \bigg(\iint_{\Omega \times \Omega}
\frac{|v(x)-v(y)|^p}{|x-y|^{n+sp}}\dx\dy\bigg)^{\frac{1}{p}}. $$
Moreover, we denote
$$ W^{s,p}_0(\Omega) \coloneqq \big\{ v\in W^{s,p}(\R^n)\colon \mbox{$v=0$\,\, a.e. in~$\Omega^{c}=\R^n \setminus \Omega$} \big\} $$
as the space of~$W^{s,p}$-functions that vanish outside of~$\Omega$. At this point, we present our notion of a weak solution to equation~\eqref{pde}. 


\begin{mydef} \label{defweakform}
A measurable function $u\colon \R^n \times (0,T) \to \R_{\geq 0}$ satisfying
$$ u \in C (0, T; L^{q+1}(\Omega)) \cap L^p(0, T; W^{s,p}( \Omega) ) $$
is a non-negative weak solution to the equation \eqref{pde} in $\Omega_T$, if there holds
\begin{align} \label{tailglobal}
    \int_{0}^{T} \int_{\R^n} \frac{|u(x,t)|^{p-1}}{1+|x|^{n+sp}} \dx \dt < \infty 
\end{align}
and 
\begin{align} \label{weakform}
 \int_{0}^{T}\iint_{\R^n\times\R^n} & |u(x,t) - u(y,t)|^{p-2}(u(x,t)-u(y,t))(\phi(x,t)-\phi(y,t)) K(x,y)\,\dx\dy\dt \\
  &- \iint_{\Omega_T} u^q \, \partial_t\phi\, \dx \dt  = \iint_{\Omega_T} f\phi\,\dx\dt \nonumber
\end{align} 
for any test function
$$ \phi \in  L^p (0, T; W^{s,p}_0(\Omega)) \cap  W^{1,q+1}_0 (0, T; L^{q+1}(\Omega)). $$
\end{mydef}


\begin{remark} \upshape
   We observe that the nonlocal integral term in~\eqref{weakform} is indeed finite for any non-negative weak solution to~\eqref{pde}. This finiteness is ensured by the global integrability assumption~\eqref{tailglobal}; cf.~\cite{liao2024modulus} for details where the very same assumption was imposed. Importantly, the condition~\eqref{tailglobal} is solely required to establish the well-posedness of the weak formulation of~\eqref{pde} and it is not invoked elsewhere in this manuscript. 
\end{remark}


In what follows, we will consider the Cauchy-Dirichlet problem 
\begin{align} \label{cauchydirichlet}
    \begin{cases}
        \partial_t u^q - \mathcal{L}^s_{p,K} = f & \mbox{in~$\Omega_T$}, \\
        u=g & \mbox{in~$\Omega^{c}$}\times(0,T), \\
        u(\cdot,0) = u_0 & \mbox{in~$\Omega$} 
    \end{cases}
\end{align}
for non-negative measurable data~$g\colon\R^n\times(0,T)\to\R$ with~$g\in L^p\big(0,T;W^{s,p}(\Omega)\big)$ that satisfies the global integrability condition~\eqref{tailglobal}, and non-negative initial data~$u_0\in L^{q+1}(\Omega,\R_{\geq 0})$. Throughout this paper, the boundary values are taken in the sense that~$(u-g)\in L^p\big(0,T;W^{s,p}_0(\Omega)\big)$, whereas the initial data~$u_0$ is taken in the a.e.-sense. We observe that the boundary data~$g$ must be specified on the entire complement~$\Omega^c$ of the spatial domain, which differs from the local case where boundary conditions are only imposed on the actual boundary~$\partial\Omega$ of the spatial domain. 


\begin{mydef} \label{defcauchydirichlet}
    A measurable function $u\colon \R^n \times (0,T) \to \R_{\geq 0}$ satisfying
    $$ u \in C (0, T; L^{q+1}(\Omega)) \cap g+L^p(0, T; W^{s,p}_{0}( \Omega) ) $$
    is a non-negative weak solution to the Cauchy-Dirichlet problem~\eqref{cauchydirichlet}, if~$u$ is a non-negative weak solution to~\eqref{pde} in the sense of Definition~\ref{defweakform} and there holds~$u(\cdot,0)=u_0$ a.e. in~$\Omega$. 
\end{mydef}



\subsection{Main result}

In this section, we present the main result of our paper, which states a global comparison principle for non-negative weak solutions to~\eqref{pde} under the assumption that one of them is time-independent in~$\Omega^{c}=\R^n\setminus\Omega$. 


\begin{mytheorem} \label{hauptresultat} 
Let~$p>1$,~$q>0$,~$f\in L^{\sigma}(\Omega)$, where~$\widetilde{p}\coloneqq\min\{p,q+1\}$ and~$\sigma\coloneqq \frac{\widetilde{p}}{\widetilde{p}-1}$, and~$u,v$ denote two non-negative weak solutions to~\eqref{pde}. Moreover, assume that either~$u(x,t_1)=u(x,t_2)$ or~$v(x,t_1)=v(x,t_2)$ a.e. in~$\Omega^c$ for a.e.~$t_1,t_2\in(0,T)$. If
$$ (u-v)_+ \in L^p\big(0,T;W^{s,p}_0(\Omega) \big) $$
and
$$  u(\cdot,0) \leq v(\cdot,0) \qquad\mbox{a.e. in~$\Omega$}, $$
then  there holds
$$ u \leq v \qquad\mbox{a.e. in~$\Omega_T$}. $$
\end{mytheorem}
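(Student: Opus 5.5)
We adapt Otto's doubling-of-time technique \cite{otto1996l1} (as used in \cite{bogelein2023h}) to the nonlocal setting, and prove an $L^1$-type contraction for $(u^q-v^q)_+$. For definiteness, assume $v$ is time-independent in $\Omega^c$; the other case is symmetric.

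\textbf{Step 1: Doubling the time variable.} We regard $u=u(x,t)$ and $v=v(x,\tau)$ as functions of two independent times $(t,\tau)\in(0,T)^2$. As a test quantity we would like to use $H_\delta(u(x,t)-v(x,\tau))$, where
\[
H_\delta(r)=\min\{1,r_+/\delta\}
\]
is a Lipschitz approximation of the Heaviside function, multiplied by a cutoff $\zeta(t)\rho_\epsilon(t-\tau)$ with $\rho_\epsilon$ a symmetric mollifier. Since $v(\cdot,\tau)=v(\cdot,t)$ in $\Omega^c$ and $(u-v)_+\in L^p(0,T;W^{s,p}_0(\Omega))$, we get $(u(\cdot,t)-v(\cdot,\tau))_+=0$ a.e. in $\Omega^c$. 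Hence $H_\delta(u(\cdot,t)-v(\cdot,\tau))\in W^{s,p}_0(\Omega)$ for a.e. $(t,\tau)$; this is exactly where the time-independence assumption enters.

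\textbf{Step 2: Handling the time derivatives.} The test functions are not differentiable in time, so we first pass through a time mollification: Steklov averages, or the exponential mollification $[\![u^q]\!]_h$ common for doubly nonlinear equations. We then take the equation for $u$ in the variables $(x,t)$ with $\tau$ frozen, and the equation for $v$ in $(x,\tau)$ with $t$ frozen, and subtract. Since $H_\delta(u-v)$ is monotone in $u$ and in $-v$, the parabolic terms combine into
\[
-\iint\!\!\int_\Omega (u^q(x,t)-v^q(x,\tau))_+\,(\partial_t+\partial_\tau)\big[\zeta(t)\rho_\epsilon(t-\tau)\big]\,\dx\,\dt\,d\tau
\]
up to errors vanishing as $\delta\to0$. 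The key identity is $(\partial_t+\partial_\tau)\rho_\epsilon(t-\tau)=0$, so only $\zeta'$ survives. To justify this, we use the monotonicity of $r\mapsto r^q$: $\mathrm{sgn}_+(u^q-v^q)=\mathrm{sgn}_+(u-v)$. We then control the mollification error in the limit $h\to0$, following Otto's argument (an $L^1$-contraction for $u^q$ rather than $u$). The source terms $f(x)$ cancel exactly because $f$ is time-independent and is tested against the same function $H_\delta(u-v)$ in both equations.

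\textbf{Step 3: The nonlocal term.} The diffusion contribution is
\[
\int\!\!\iint\big[\Phi(u(x,t)-u(y,t))-\Phi(v(x,\tau)-v(y,\tau))\big]\big[H_\delta(w(x))-H_\delta(w(y))\big]K(x,y)\,\dx\,\dy,
\]
where $\Phi(r)=|r|^{p-2}r$ and $w=u(\cdot,t)-v(\cdot,\tau)$. This integrand is non-negative: $\Phi$ and $H_\delta$ are nondecreasing, and $u(x)-u(y)-(v(x)-v(y))=w(x)-w(y)$. Since it enters with a favourable sign, we simply discard it. Integrability of all terms, including the tails, follows from \eqref{tailglobal} and $f\in L^\sigma$, via Hölder and Sobolev embedding in $\widetilde p$.

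\textbf{Step 4: Conclusion.} Letting $\delta\to0$ and then $\epsilon\to0$ gives
\[
-\int_0^T\!\!\int_\Omega (u^q-v^q)_+\,\zeta'(t)\,\dx\,\dt\le 0
\]
for nonnegative $\zeta$ vanishing near $T$, provided $\zeta(0)$-terms are controlled. Using $u\in C(0,T;L^{q+1})$ and $u(\cdot,0)\le v(\cdot,0)$, the initial contribution vanishes. Choosing $\zeta$ to approximate $\chi_{(0,t_0)}$ then yields $\int_\Omega(u^q-v^q)_+(t_0)=0$, hence $u\le v$ a.e.

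\textbf{Main obstacle.} We expect the main difficulty to be Step 2: rigorously justifying the combination of the time derivatives of $u^q$ and $v^q$ against $H_\delta(u-v)$, given that $\partial_t u^q$ exists only in a mollified sense. The plan is to use the exponential time mollification separately in $t$ and $\tau$. For the resulting error terms we use the convexity-type inequality
\[
\partial_t[\![u^q]\!]_h\,\big(H(u)-H([\![u^q]\!]_h^{1/q})\big)\le0,
\]
as in \cite{bogelein2023h}.
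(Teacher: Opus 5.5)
Your proposal is correct in outline and follows the paper's overall strategy: Otto's doubling of time; truncation by $\mathcal{H}_\delta(u(\cdot,t)-v(\cdot,\tau))$, which is admissible exactly because of the time-independence in $\Omega^c$; cancellation of the $f$-terms; a mollifier in $t-\tau$ so that only $\zeta'$ survives; and a cutoff at $t=0$. Two sub-steps are done differently.

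For the nonlocal term, your pointwise observation that $[\Phi(b)-\Phi(a)][\mathcal{H}_\delta(w(x))-\mathcal{H}_\delta(w(y))]\ge0$, because $b-a=w(x)-w(y)$, holds for every fixed $\delta$. The paper instead first lets $\delta\downarrow0$, splits into the sets $\{u(x)\ge v(x)\}\times\{u(y)\le v(y)\}$, and uses an integral representation of $\Phi(b)-\Phi(a)$. Your argument is shorter and does not need the pointwise limit of $\mathcal{H}_\delta$.

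For the time term, the paper (Lemmas~\ref{hilfslemeins} and~\ref{hilfslemzwei}) mollifies inside the test function, e.g. $\mathcal{H}_\delta([u]_h-\widetilde v)\phi$, and proves exact identities using both $[\cdot]_h$ and $[\cdot]_{\bar h}$. In the $v$-equation it must then also mollify the frozen function, $\widehat{\mathcal H}_\delta([v]_h-[\widetilde u]_h)\phi$. Otherwise the test function would not vanish in $\Omega^c$, since $[v]_h$ deflates. This produces extra error terms containing $h^{-1}e^{-t/h}\widetilde u$.

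Your route avoids this. You use the mollified weak form, obtained by testing with $[\![\varphi]\!]_{\bar h}$, applied to the unmollified $\varphi=\mathcal{H}_\delta(u-\widetilde v)\phi$ resp. $\mathcal{H}_\delta(\widetilde u-v)\phi$. Admissibility then comes directly from your Step~1. You only get one-sided inequalities, but these are precisely the ones the final argument uses, namely ``$\le$'' in both \eqref{est:hilfslemeinsest} and \eqref{est:hilfslemzweiest}.

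You should fix the sign of your key inequality, though. Since $\partial_t[\![u^q]\!]_h=h^{-1}(u^q-[\![u^q]\!]_h)$ and $H$ is nondecreasing, one has $\partial_t[\![u^q]\!]_h\,(H(u)-H([\![u^q]\!]_h^{1/q}))\ge0$, not $\le 0$. The ``$\le0$'' version is the one needed in the $v$-equation, where the test function is nonincreasing in $v$. With these signs, the forward mollification alone delivers both required inequalities.
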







As an immediate consequence of Theorem~\ref{hauptresultat}, we obtain the following uniqueness result for the Cauchy-Dirichlet problem~\eqref{cauchydirichlet}. 


\begin{mycor} \label{corollaryzwei}
  Let~$p>1$,~$q>0$, and consider the data
  \begin{align*}
      \begin{cases}
          f\in L^{\sigma}(\Omega), \\
          g\in L^p\big(0,T; W^{s,p}(\Omega,\R_{\geq 0})\big), \\
          u_0\in L^{q+1}(\Omega,\R_{\geq 0}),
      \end{cases}
  \end{align*}
  where the measurable boundary data~$g\colon\R^n\times(0,T)\to\R_{\geq 0}$ satisfies the global integrability condition~\eqref{tailglobal} and also~$g(x,t_1)=g(x,t_2)$ a.e. in~$\Omega^c$ for a.e.~$t_1,t_2\in(0,T)$. Then, there exists a unique non-negative weak solution to the Cauchy-Dirichlet problem~\eqref{cauchydirichlet} under the given data. 
\end{mycor}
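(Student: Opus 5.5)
The corollary asserts two things: existence and uniqueness of a non-negative weak solution to \eqref{cauchydirichlet}. I will prove them separately. Uniqueness will come from Theorem~\ref{hauptresultat}. For existence, I do not expect to derive anything from the comparison principle; I would instead rely on a known existence theory for doubly nonlinear nonlocal Cauchy-Dirichlet problems. This would be constructed by time discretization, or by a Galerkin or Rothe scheme in the spirit of Alt and Luckhaus \cite{existence}, adapted to the fractional setting. The hypothesis $f\in L^\sigma(\Omega)$ with $\sigma=\widetilde p/(\widetilde p-1)$ is exactly what makes $\iint f\phi$ controllable by the energy quantities in $L^p W^{s,p}$ and $L^\infty L^{q+1}$.

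\textbf{Existence.} The key steps are as follows.
\begin{enumerate}
\item Since $g$ is time-independent in $\Omega^c$, I write $u=g+w$ with $w(\cdot,t)\in W^{s,p}_0(\Omega)$.
\item For a time step $h=T/N$, I solve successively the elliptic problems
\[
\frac{(g+w_k)^q-(g+w_{k-1})^q}{h}-\mathcal L^s_{p,K}(g+w_k)=f,
\]
with $u_0$ used at $k=0$. Each step is the Euler--Lagrange equation of a strictly convex, coercive functional on $W^{s,p}_0(\Omega)$, namely $\frac{1}{h}\int \Phi(g+w)$ with $\Phi'(\tau)=\tau^q$, plus the $\frac1p$-energy with kernel $K$, minus the terms involving $f$ and the previous step. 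The direct method gives a minimizer.
\item Non-negativity follows by testing with the negative part, using $g\ge0$ and $u_0\ge 0$.
\item Testing with $w_k$ and summing over $k$ gives uniform bounds in $L^\infty L^{q+1}\cap L^pW^{s,p}$. Time-independence of $g$ in $\Omega^c$ is what avoids a time derivative of $g$ here. The tail condition \eqref{tailglobal} on $g$ controls the nonlocal terms coming from $\Omega^c$.
\item Compactness of the piecewise interpolants, via a Kolmogorov/Aubin--Lions type argument for $u^q$ (as in Alt--Luckhaus), gives a.e.\ convergence. The nonlinear nonlocal term is then identified by Minty's trick, using monotonicity of $\tau\mapsto|\tau|^{p-2}\tau$.
\item The initial datum and the continuity $u\in C(0,T;L^{q+1}(\Omega))$ are recovered from the energy estimate and the weak formulation with mollified time test functions.
\end{enumerate}
I expect the main obstacle to be this limit passage, specifically the strong convergence needed for the term $u^q\partial_t\phi$ and the identification of the limit.

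\textbf{Uniqueness.} Let $u,v$ be two non-negative weak solutions with the same data. Then $u-g$ and $v-g$ lie in $L^p(0,T;W^{s,p}_0(\Omega))$, so $u-v\in L^p W^{s,p}_0$. Since positive truncation preserves $W^{s,p}$ and the zero values outside $\Omega$, we get $(u-v)_+\in L^pW^{s,p}_0(\Omega)$, and likewise $(v-u)_+$. Both solutions equal $g$ in $\Omega^c$, so both are time-independent there. The initial values coincide a.e. Theorem~\ref{hauptresultat} applied to $(u,v)$ and then to $(v,u)$ gives $u=v$ a.e.\ in $\Omega_T$. They also agree in $\Omega^c\times(0,T)$, hence everywhere.
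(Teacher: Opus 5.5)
Your uniqueness argument is the paper's proof of Corollary~\ref{corollaryzwei}. Both solutions equal the time-independent datum $g$ in $\Omega^c$, they share the initial values, and Theorem~\ref{hauptresultat} is applied in both directions. You are slightly more careful than the paper in checking that $(u-v)_+$, and not just $u-v$, lies in $L^p(0,T;W^{s,p}_0(\Omega))$. Be aware, though, that uniqueness is all the paper proves: its argument never addresses existence.

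The genuine gap is in the existence half, at Step~3. Testing the discrete problem with $-(g+w_k)_-$ is admissible because $g\ge0$, and it makes the time-difference term and the nonlocal term non-negative. But the remaining term $-\int_\Omega f\,(g+w_k)_-\,\dx$ has no sign when $f$ changes sign, so nothing forces $(g+w_k)_-=0$.

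A better scheme cannot fix this. For $f\equiv-1$, $g\equiv0$, $u_0\equiv0$ there is no non-negative weak solution at all. Suppose $u$ were one. Apply Lemma~\ref{hilfslemeins} with $\widetilde v\equiv0$, which is admissible since $u=0$ in $\Omega^c$. Together with the monotonicity of $\mathcal{H}_\delta$, this gives $-\iint_{\Omega_T}\mathfrak{h}_\delta(u,0)\,\phi'\,\dx\dt\le-\iint_{\Omega_T}\mathcal{H}_\delta(u)\,\phi\,\dx\dt\le0$ for every non-negative $\phi\in C^\infty_0(0,T)$. Letting $\delta\downarrow0$ and repeating the final step of the proof of Theorem~\ref{hauptresultat} with $u(\cdot,0)=0$ yields $u\equiv0$. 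That contradicts the weak formulation, since its left-hand side then vanishes while $\iint_{\Omega_T}f\phi\,\dx\dt=-\iint_{\Omega_T}\phi\,\dx\dt<0$ for suitable $\phi\ge0$.

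So the existence assertion needs an extra hypothesis such as $f\ge0$, under which your Step~3 goes through. Even then, Steps~2--6 remain an outline with open points. The energy in Step~2 can be $+\infty$, because $g$ is only assumed in $W^{s,p}(\Omega)$ with an $L^{p-1}$-tail. The $\Omega^c\times\Omega^c$ interaction must therefore be discarded, and the $\Omega\times\Omega^c$ interaction is not controlled near $\partial\Omega$. In Step~4, $g$ is time-independent only in $\Omega^c$, not in $\Omega$. Testing with $w_k$ therefore still produces difference quotients of $g$ inside $\Omega$, unless you first replace $g$ by a fixed slice $g(\cdot,t_0)$, which requires $g(\cdot,t)-g(\cdot,t_0)\in W^{s,p}_0(\Omega)$.
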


\begin{proof} 
    Let~$u_1,u_2$ denote two non-negative weak solutions to the Cauchy-Dirichlet problem~\eqref{cauchydirichlet}. Then, for the boundary data there holds
    $$ u_1-u_2 = (u_1-g)-(u_2-g)\in L^p\big(0,T;W^{s,p}_0(\Omega)\big), $$ 
    and for the initial data we have
    $$ u_1(\cdot,0)=u_2(\cdot,0)=u_0 \qquad\mbox{a.e. in~$\Omega$}. $$
    Since the boundary datum~$g$ is time-independent in~$\Omega^c$ for  a.e.~$t\in(0,T)$, according to the assumptions of the corollary, and there holds
    $$ u_1(\cdot,t)=u_2(\cdot,t)=g(\cdot,t) \qquad\mbox{a.e. in~$\Omega^c\times(0,T)$}, $$
    we may apply Theorem~\ref{hauptresultat}, which yields~$u_1 \leq u_2$ a.e. in~$\Omega_T$. By interchanging the roles of~$u_1$ and~$u_2$, we similarly obtain the reverse inequality~$u_1 \geq u_2$ a.e. in~$\Omega_T$, which yields~$u_1=u_2$ a.e. in~$\Omega_T$ and in turn implies the uniqueness of a non-negative weak solution to~\eqref{cauchydirichlet}. 
\end{proof}


\subsection{Strategy of the proof} \label{strategy}
Due to the power-nonlinearity~$q>0$ present in the evolutionary term of our equation~\eqref{pde}, a delicate analysis is required in order to circumvent the lack of a time derivative and still obtain a comparison principle. In fact, an appropriate choice of test function is not straightforward at all. We follow the approach taken in~\cite[Proposition~4.16]{bogelein2023h}, which is inspired by Otto's idea from~\cite{otto1996l1}, whereas the latter itself relies on the method of Kru{\v{z}}kov~\cite{kruvzkov1970first}. The technique of proof relies on a doubling of the time variable argument. Indeed, the time-independent nature of~$f$ and~$K$ are a crucial condition for the proof. Theorem~\ref{hauptresultat} builds upon two useful preliminary results, Lemma~\ref{hilfslemeins} and Lemma~\ref{hilfslemzwei}, that essentially take care of the power-nonlinearity~$q>0$ in an appropriate way. Lemma~\ref{hilfslemeins} provides an integral relation between the non-negative weak solution~$u$ and a stationary non-negative measurable function~$\Tilde{v}$. In a similar manner, Lemma~\ref{hilfslemzwei} states a comparable result for~$v$ and a stationary non-negative measurable function~$\Tilde{u}$. In the proof of the main result, Theorem~\ref{hauptresultat}, the two non-negative weak solutions~$u,v$ will take the role of~$\Tilde{u},\Tilde{v}$ in Lemma~\ref{hilfslemeins} resp. Lemma~\ref{hilfslemzwei}. At this point, it is crucial to note that this approach necessitates~$u,v$ to be weak solutions, rather than non-negative weak sub- and super-solutions, which usually is the common assumption in terms of comparison principles. Moreover, the proof of Theorem~\ref{hauptresultat} heavily relies on the assumption that either~$u$ or~$v$ exhibits a time-independent behavior a.e. in~$\Omega^c$, as stated within the theorem.  


\subsection{Plan of the paper} \label{planpaper}
The paper is organized as follows: in Section~\ref{sec:preliminaries}, we begin by presenting the notation and framework, including supplementary material required later on. In Section~\ref{sec:comparison}, we give the proof of the comparison principle stated in Theorem~\ref{hauptresultat}, our main result of the paper. 


\subsection*{Acknowledgements}
This research was funded in whole or in part by the Austrian Science Fund (FWF) [10.55776/P36295]. For open access purposes, the author has applied a CC BY public copyright license to any author accepted manuscript version arising from this submission. \,\\ 

\textbf{Conflict of Interest.} The author declares that there is no conflict of interest. \,\\ 

\textbf{Data availability.} This manuscript has no associated data. 


\section{Preliminaries} \label{sec:preliminaries}
\subsection{Notation and setting}

The following Lipschitz function on~$\R$, given by
\begin{align} \label{hdelta}
    \mathcal{H}_\delta(x) \coloneqq 
    \displaystyle\begin{cases}
    1 & \mbox{for $x\geq \delta$},\\
    \frac{x}{\delta} & \mbox{for $0< x < \delta$},\\
    0 & \mbox{for $x\leq 0$}, 
    \end{cases}
\end{align}
where~$\delta>0$, will be useful for the derivation of the comparison principle. Note that~$\mathcal{H}_\delta$ approximates the Heaviside function on~$\R$ when~$\delta\downarrow 0$. Moreover, we introduce
\begin{align} \label{inthdelta}
    \mathfrak{h}_\delta(r,s) \coloneqq \int_{s}^r \mathcal{H}_\delta(z-s) q z^{q-1}\,\dz, \qquad\mbox{$r,s\in\R_{\geq 0}$} 
\end{align}
as well as
\begin{align} \label{inthdeltareflection}
    \widehat{\mathfrak{h}}_\delta(r,s) \coloneqq \int_{s}^r \widehat{\mathcal{H}}_\delta(z-s) q z^{q-1}\,\dz, \qquad\mbox{$r,s\in\R_{\geq 0}$}, 
\end{align}
where~$\widehat{\mathcal{H}}_\delta(s) \coloneqq -\mathcal{H}_\delta(-s)$ for~$s\in\R$ denotes the odd reflection of~$\mathcal{H}_\delta$. The positive part of a real quantity~$a\in\R$ is denoted as~$a_+=\max\{a,0\}$. Finally, by~$\bigchi_A(x)$, we refer to the characteristic function for some set~$A\subset\R^n$.   
   

\subsection{Mollification in time} \label{subsec:mollificationintime} 
According to our definition, weak solutions may not necessarily exhibit weak differentiability with respect to the time variable~$t\in(0,T)$. In order to overcome this lack of regularity, we introduce a regularization technique involving the exponential function. Given~$v \in L^1(\Omega_T)$ and~$0<h<T$, we define mollified functions 
\begin{equation} \label{mollification}
	[v]_{h}(x,t) 
	\coloneqq
    \displaystyle{\mbox{$\frac{1}{h}$} \int_{0}^{t} e^{\frac{\tau-t}{h}} v(x,\tau) \,\d\tau }, \qquad [v]_{\Bar{h}}(x,t) 
	\coloneqq
    \displaystyle{\mbox{$\frac{1}{h}$} \int_{t}^{T} e^{\frac{t-\tau}{h}} v(x,\tau) \,\d\tau }. 
\end{equation} 

 We recall the most relevant properties here, where we refer to~\cite[Appendix B]{bogelein2013parabolic} for additional information.


\begin{mylem} \label{lem:timemol} 
For any  $r\geq 1$ we have:
\begin{itemize}
    \item[(\upshape i)] If $u\in L^r(\Omega_T)$, then $[u]_h \in L^r(\Omega_T)$. Moreover, 
    $\| [u]_h\|_{L^r(\Omega_T)}\leq \|u\|_{L^r(\Omega_T)}$ and $ [ u ]_h \to u$
    in $L^r(\Omega_T)$ and a.e. in $\Omega_T$ as $h\downarrow 0$. 
    \item[(\upshape ii)] $[u]_{h}\in C\big(0,T; L^r(\Omega)\big)$.
    \item[(\upshape iii)] There holds
    \begin{equation*}
	 \partial_t [ u ]_h
	 =
	 \tfrac{1}{h} \big(u-[ u ]_h\big), \quad
	  \partial_t [ u ]_{\bar{h}}
	 =
	 \tfrac{1}{h} \big([ u ]_{\bar{h}}- u\big)
\end{equation*}
a.e. in $\Omega_T$. 
    \item[(\upshape iv)] If $Du\in L^r(\Omega_T)$, then $D[ u ]_{h} = [ Du ]_{h}\to Du$
    in $L^r(\Omega_T)$ and a.e. in $\Omega_T$ as $h\downarrow 0$. 
    \item[(\upshape v)] If $u\in C\big(0,T; L^r(\Omega)\big) $, then $[u]_{h}(\cdot ,t)\to u(\cdot,t)$
    in $L^r(E)$ and a.e. in $\Omega$  for every $t\in [0,T]$ as $h\downarrow 0$. 
\end{itemize}
For $[ u ]_{\bar{h}}$, there hold similar conclusions as in  {\rm (i)}, {\rm(ii)}, {\rm(iv)}, and {\rm (v)}.
\end{mylem}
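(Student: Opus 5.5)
We verify the properties of $[u]_h$ directly from the definition \eqref{mollification}, viewing $[u]_h(x,\cdot)$ as a convolution in time with the kernel $k_h(\sigma)=\frac1h e^{-\sigma/h}\bigchi_{[0,\infty)}(\sigma)$. We extend $u$ by zero outside $(0,T)$, so that
$$[u]_h(x,t)=\int_{\R}k_h(t-\tau)\,u(x,\tau)\,\d\tau .$$
The kernel satisfies $\int_\R k_h\,\d\sigma=1$, and it concentrates at $0^+$ as $h\downarrow0$. The statements for $[u]_{\bar h}$ are obtained in the same way by the time reflection $t\mapsto T-t$, which turns $[\cdot]_{\bar h}$ into $[\cdot]_h$.

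\textbf{(iii).} This is elementary. For a.e.\ $x$ the map $\tau\mapsto u(x,\tau)$ lies in $L^1(0,T)$. Write
$$[u]_h(x,t)=\tfrac1h e^{-t/h}\int_0^t e^{\tau/h}u(x,\tau)\,\d\tau .$$
The integral is absolutely continuous in $t$, with derivative $e^{t/h}u(x,t)$ for a.e.\ $t$. The product rule then gives
$$\partial_t[u]_h=-\tfrac1h[u]_h+\tfrac1h u .$$
The analogous computation for $[u]_{\bar h}$ produces the opposite sign.

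\textbf{(i).} Young's (or Minkowski's) inequality in time, applied for a.e.\ $x$ and then integrated over $\Omega$, gives
$$\|[u]_h(x,\cdot)\|_{L^r(0,T)}\le\|k_h\|_{L^1}\|u(x,\cdot)\|_{L^r(0,T)}=\|u(x,\cdot)\|_{L^r(0,T)}.$$
For convergence, we use that $[\,\cdot\,]_h$ is linear with operator norm at most $1$. It therefore suffices to prove convergence on the dense class $C_c^\infty(\Omega_T)$. For $\varphi$ in that class, the substitution $\tau=t-h\sigma$ gives
$$[\varphi]_h(x,t)-\varphi(x,t)=\int_0^{t/h}e^{-\sigma}\big(\varphi(x,t-h\sigma)-\varphi(x,t)\big)\,\d\sigma-e^{-t/h}\varphi(x,t).$$
This tends to $0$ uniformly, since $\varphi$ vanishes near $t=0$. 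A standard $\varepsilon/3$ argument then gives $L^r$ convergence for general $u$.

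For a.e.\ convergence, the same substitution bounds $|[u]_h-u|(x,t)$ by
$$\int_0^\infty e^{-\sigma}|u(x,t-h\sigma)-u(x,t)|\,\d\sigma+e^{-t/h}|u(x,t)|,$$
where the integral is over the part of $(0,\infty)$ with $t-h\sigma>0$. Split the $\sigma$-integral at a large level $M$. The part $\sigma\le M$ is bounded by $M\,e^{0}$ times the averages $\frac{1}{hM}\int_{t-hM}^t|u(x,\tau)-u(x,t)|\,\d\tau$, which tend to $0$ at every Lebesgue point $t$ of $u(x,\cdot)$. For the tail $\sigma>M$, we use that the decreasing kernel is dominated by the Hardy--Littlewood maximal function, which gives a bound $C e^{-M/2}\,\mathcal M(u(x,\cdot))(t)$. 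This is finite for a.e.\ $t$, so the tail can be made small by choosing $M$ large. The hardest step is exactly this a.e.\ convergence. An alternative route is to use $L^r$ convergence, which only gives a.e.\ convergence along a subsequence $h_j\to0$. To get the full family, one first shows a.e.\ convergence for continuous $u$, and then concludes via the maximal inequality $\sup_h|[u]_h|\le 2\,\mathcal M u$, whose right-hand side has weak-$L^1$ bounds.

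\textbf{(ii).} Since $[u]_h(\cdot,t)=\frac1h\int_0^t e^{(\tau-t)/h}u(\cdot,\tau)\,\d\tau$ is a Bochner integral with a kernel depending continuously on $t$, continuity in $L^r(\Omega)$ follows. Indeed, for $t'<t$,
$$\|[u]_h(t)-[u]_h(t')\|_{L^r(\Omega)}\le\tfrac1h\int_{t'}^t\|u(\tau)\|_{L^r(\Omega)}\,\d\tau+\big(1-e^{-(t-t')/h}\big)\,\|[u]_h(t')\|_{L^r(\Omega)}.$$
The first term tends to $0$ by absolute continuity of the integral, since $\|u(\tau)\|_{L^r(\Omega)}\in L^1(0,T)$ by Hölder's inequality. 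The second term tends to $0$ because $1-e^{-(t-t')/h}\to0$ and $\|[u]_h(t')\|_{L^r(\Omega)}$ is bounded in $t'$ by $h^{-1/r}\|u\|_{L^r(\Omega_T)}$, again by Hölder's inequality.

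\textbf{(iv).} Spatial derivatives commute with the time integral, which we check against test functions in $x$ using Fubini's theorem. This gives $D[u]_h=[Du]_h$, and convergence then follows from (i) applied to $Du$.

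\textbf{(v).} Fix $t$ and use the continuity of $\tau\mapsto u(\cdot,\tau)$ in $L^r(\Omega)$. By Minkowski's inequality,
$$\|[u]_h(t)-u(t)\|_{L^r(\Omega)}\le\int_0^{t/h}e^{-\sigma}\|u(t-h\sigma)-u(t)\|_{L^r(\Omega)}\,\d\sigma+e^{-t/h}\|u(t)\|_{L^r(\Omega)}.$$
The integral tends to $0$ by dominated convergence, with dominating function $2\sup_\tau\|u(\tau)\|_{L^r(\Omega)}\,e^{-\sigma}$. The last term tends to $0$ for $t>0$, while for $t=0$ we have $[u]_h(\cdot,0)=0$. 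Hence the claim at $t=0$ has to be read for $[u]_{\bar h}$, or with the initial value built into the definition; we should flag this point. Finally, a.e.\ convergence in $x$ follows as in (i).
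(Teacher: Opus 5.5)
The paper gives no proof of this lemma; it only refers to Appendix~B of B\"ogelein--Duzaar--Marcellini, so your argument has to stand on its own. Your treatment of (i)--(iv), of the $L^r(\Omega)$-convergence in (v), and the reflection $t\mapsto T-t$ for $[u]_{\bar h}$ is correct. Note only that (i) needs $r<\infty$: for $u\equiv 1$ one has $\|[u]_h-u\|_{L^\infty(\Omega_T)}=1$. Your flag at $t=0$ is also justified, since with the paper's definition $[u]_h(\cdot,0)=0$; adding $e^{-t/h}u(\cdot,0)$ to the mollifier, as is customary in the literature the paper cites, repairs this.

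The genuine gap is your final sentence, ``a.e.\ convergence in $x$ follows as in (i).'' The argument in (i) gives convergence at every Lebesgue point $t$ of $u(x,\cdot)$ for a.e.\ $x$, i.e.\ on a set of full measure in $\Omega_T$. For a prescribed $t$, however, the slice $\Omega\times\{t\}$ is a null set. Continuity of $t\mapsto u(\cdot,t)$ in $L^r(\Omega)$ does not make that particular $t$ a Lebesgue point of $u(x,\cdot)$ for a.e.\ $x$.

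In fact the pointwise claim is false for the full family $h\downarrow 0$. Here is a counterexample.
\begin{itemize}
\item Let $E_k\subset\Omega$ be a typewriter sequence: all dyadic subcubes of a fixed cube $Q\subset\Omega$, enumerated level by level. Then $|E_k|\to 0$, while every $x\in Q$ lies in infinitely many $E_k$.
\item Fix $t_0\in(0,T)$ and set $J_k=[t_0-2^{-k},t_0-2^{-k-1})$. On the first half of $J_k$, let $u(\cdot,\tau)$ interpolate affinely from $\chi_{E_{k-1}}$ to $\chi_{E_k}$; on the second half, let it equal $\chi_{E_k}$.
\item Extend $u$ constantly for small $\tau$, and set $u(\cdot,\tau)=0$ for $\tau\ge t_0$.
\end{itemize}
Since $\|u(\cdot,\tau)\|_{L^r(\Omega)}\le |E_{k-1}|^{1/r}+|E_k|^{1/r}$ on $J_k$, we get $u\in C([0,T];L^r(\Omega))$ with $u(\cdot,t_0)=0$. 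But for $h=2^{-k}$ the weight $\frac1h e^{(\tau-t_0)/h}$ is at least $2^k e^{-1}$ on $J_k$. Hence $[u]_h(x,t_0)\ge 2^k e^{-1}\,2^{-k-2}=\frac{1}{4e}$ for every $x\in E_k$. It follows that $\limsup_{h\downarrow 0}[u]_h(x,t_0)\ge \frac{1}{4e}$ on all of $Q$, while $u(\cdot,t_0)=0$.

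What you can prove is weaker. First, a.e.\ convergence in $\Omega$ holds along a subsequence $h_j\downarrow 0$ (depending on $t$), which follows from the $L^r(\Omega)$-convergence you already established. Second, a.e.\ convergence in $\Omega$ holds for a.e.\ $t$, which follows from (i) by Fubini. The statement of (v) should be weakened accordingly. The paper never invokes (v), so nothing downstream is affected.
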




\section{Comparison principle} \label{sec:comparison}

In this section, we aim to establish our main result, that is Theorem~\ref{hauptresultat}. As the first step in this direction, we begin with the following lemma. 


\begin{mylem} \label{hilfslemeins} 
    Let~$p>1$,~$q>0$, and~$u$ be given as in Theorem~\ref{hauptresultat}. Moreover, assume that~$\widetilde{v}\colon\R^n\to\R_{\geq 0}$ is measurable with~$\widetilde{v} \in W^{s,p}(\Omega)$. 
    If
    $$ u(\cdot,t)\leq \widetilde{v} \qquad\mbox{a.e. in~$\Omega^c$} $$
    for a.e.~$t\in(0,T)$, then there holds
    \begin{align} \label{est:hilfslemeinsest}
        \int_{0}^{T}\iint_{\R^n\times\R^n} & |u(x,t) - u(y,t)|^{p-2}(u(x,t)-u(y,t))(\psi(x,t) - \psi(y,t) ) K(x,y)\,\dx\dy\dt \\
         &- \iint_{\Omega_T} \mathfrak{h}_\delta(u,\widetilde{v})\partial_t\phi\,\dx\dt = \iint_{\Omega_T} f\mathcal{H}_\delta(u - \widetilde{v})\phi\,\dx\dt  \nonumber, 
    \end{align}
    for any~$\phi\in C^\infty_0(0, T)$. Here,
    $$ \psi(\cdot,t)\equiv \mathcal{H}_\delta(u(\cdot,t)-\widetilde{v}(\cdot))\phi(t) \qquad\mbox{a.e. in~$\R^n$} $$ 
    for a.e.~$t\in(0,T)$.
\end{mylem}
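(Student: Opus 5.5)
The plan is to test the weak formulation \eqref{weakform} with a time-mollified version of $\psi$. Since $u$ has no time derivative, the test function must be regularized, and the parabolic term is handled by a convexity argument. The natural choice is
$$ \varphi_h(x,t) \coloneqq \mathcal{H}_\delta\big([u]_h(x,t)-\widetilde v(x)\big)\,\phi(t). $$
Here $[u]_h$ is the exponential mollification from Section~\ref{subsec:mollificationintime}, applied on all of $\R^n$.

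\textbf{Admissibility of $\varphi_h$.} On $\Omega^c$ we have $u(\cdot,\tau)\le\widetilde v$ for a.e.\ $\tau$. Averaging in time with the non-negative weight gives $[u]_h\le (1-e^{-t/h})\widetilde v\le \widetilde v$, since $\widetilde v\ge 0$. Hence $\mathcal{H}_\delta([u]_h-\widetilde v)=0$ on $\Omega^c$. Because $\mathcal{H}_\delta$ is Lipschitz with constant $1/\delta$ and $[u]_h-\widetilde v\in L^p(0,T;W^{s,p}(\Omega))$, and because the function vanishes outside $\Omega$, I expect $\varphi_h\in L^p(0,T;W^{s,p}_0(\Omega))$. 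Some care is needed for the cross terms $\Omega\times\Omega^c$; there I would use that $(x,y)\mapsto\mathcal{H}_\delta(\cdot)$ is bounded and Lipschitz, together with the fact that $u-\widetilde v\le 0$ outside $\Omega$, so that the positive part of the difference is controlled. Time regularity follows from Lemma~\ref{lem:timemol}(iii) and the compact support of $\phi$. A further approximation to justify the weak formulation for this class of test functions is routine.

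\textbf{Parabolic term.} This is the main step. I would write $-\iint u^q\partial_t\varphi_h = -\iint ([u]_h^q)\partial_t\varphi_h - \iint (u^q-[u]_h^q)\partial_t\varphi_h$. The first piece equals $-\iint \mathfrak h_\delta([u]_h,\widetilde v)\phi'$ after differentiating the chain $\partial_t\mathfrak h_\delta([u]_h,\widetilde v)=\mathcal H_\delta([u]_h-\widetilde v)\,q[u]_h^{q-1}\partial_t[u]_h$ and integrating by parts in time. This uses that $\widetilde v$ is time-independent, which is crucial, and that $\phi$ has compact support. For the second piece, I would use $\partial_t[u]_h=\frac1h(u-[u]_h)$ and the monotonicity of both $\mathcal H_\delta$ and $z\mapsto z^q$. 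The product $(u^q-[u]_h^q)\,\mathcal H_\delta'([u]_h-\widetilde v)(u-[u]_h)\phi$ then has a sign when $\phi\ge0$. This yields only an inequality, so instead I would compare with the mollification $[\cdot]_{\bar h}$ as well, or equivalently pass to the limit using $[u]_h\to u$ in $L^{q+1}$ together with dominated convergence. The term $-\iint(u^q-[u]_h^q)\mathcal H_\delta(\cdot)\phi'$ vanishes as $h\downarrow0$. The remaining term $\frac1h\iint(u^q-[u]_h^q)\mathcal H'_\delta(\ldots)(u-[u]_h)\phi$ must be shown to vanish. Since we need an equality for arbitrary (signed) $\phi$, I would obtain the inequality for both the forward mollification $[u]_h$ and the backward one $[u]_{\bar h}$; these give opposite signs, and the two inequalities combine to the equality. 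This is the step I expect to be the hardest; if it fails, I would instead replace $\mathcal H_\delta([u]_h-\widetilde v)$ by $[\mathcal H_\delta(u-\widetilde v)]_h$-type constructions in the spirit of \cite[Proposition~4.16]{bogelein2023h}.

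\textbf{Elliptic and source terms.} By Lemma~\ref{lem:timemol}(i) applied pointwise in $x$, we have $[u]_h\to u$ a.e. Then $\varphi_h\to\psi$ a.e. with $|\varphi_h|\le\|\phi\|_\infty$, so the $f$-term converges by dominated convergence, using $f\in L^\sigma$ and the boundedness of $\Omega_T$. For the nonlocal term, the difference quotients $(\varphi_h(x)-\varphi_h(y))/|x-y|^{s}$ are bounded in $L^p$ by $\delta^{-1}$ times those of $[u]_h$, which converge in $L^p$. Combined with a.e.\ convergence, this gives weak convergence in $L^p$. That suffices against the fixed function $|u(x)-u(y)|^{p-2}(u(x)-u(y))K\,|x-y|^{s}\in L^{p'}$; the tail contributions from $\Omega\times\Omega^c$ are controlled using \eqref{tailglobal}. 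Letting $h\downarrow0$ yields \eqref{est:hilfslemeinsest}.
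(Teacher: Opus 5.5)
Your proposal follows the paper's proof essentially step for step. It uses the same test function $\mathcal{H}_\delta([u]_h-\widetilde v)\phi$ and the same splitting $u^q=[u]_h^q+(u^q-[u]_h^q)$, discards the $h^{-1}$-term by its sign, and uses the backward mollification $[u]_{\bar h}$ for the reverse inequality. Your bound $[u]_h\le(1-e^{-t/h})\widetilde v$ on $\Omega^c$ is in fact the right justification of admissibility, sharper than the paper's ``$[u]_h\le u$''.

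One point to tidy: the $h^{-1}$-term does not need to vanish, and the limit argument you offer as ``equivalent'' cannot handle its $h^{-1}$ factor. Only its sign for $\phi\ge 0$ is used, so you first get the identity for non-negative $\phi$. The signed case then follows by linearity of the identity in $\phi$, writing $\phi$ as a difference of two non-negative functions in $C^\infty_0(0,T)$.
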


\begin{proof}
We test the weak form~\eqref{weakform} satisfied by~$u$ with~$\zeta = \mathcal{H}_\delta([u]_h - \widetilde{v})\phi$, where~$\phi\in C^\infty_0(0, T)$ is chosen arbitrarily. Indeed, this choice of test function is admissible since there holds
$$ [u]_h\leq u\leq v \qquad\mbox{a.e. in~$\Omega^c$} $$
 for a.e.~$t\in(0,T)$ and~$h>0$ small enough, such the Dirichlet condition~$([u]_h-v)_+\in L^p\big(0,T;W^{s,p}_0(\Omega)\big)$ is satisfied. Due to the definition of~$\mathcal{H}_\delta$, we further have that $\mathcal{H}_\delta(u(\cdot,t)-\widetilde{v}(\cdot)) \in W^{s,p}_0(\Omega)$ for a.e.~$t\in(0,T)$. For the evolutionary term of the equation, we use the property (iii) of Lemma~\eqref{lem:timemol} and the fact that~$\widetilde{v}$ is independent of the time variable, and estimate
\begin{align*}
    \iint_{\Omega_T} - u^q \partial_t \zeta\,\dx\dt &= \iint_{\Omega_T} ( -[u]^q_h+[u]^q_h - u^q) \partial_t \zeta\,\dx\dt \\
    &= \iint_{\Omega_T} - [u]^q_h \partial_t \zeta\,\dx\dt + \iint_{\Omega_T} ([u]^q_h-u^q) \mathcal{H}_\delta([u]_h - \widetilde{v}) \partial_t \phi\,\dx\dt \\
    &\quad - \iint_{\Omega_T} ([u]^q_h - u^q) \mathcal{H}'_\delta([u]_h-\widetilde{v}) h^{-1}([u]_h-u) \phi\,\dx\dt \\
    &\leq \iint_{\Omega_T} \partial_t [u]^q_h \zeta \,\dx\dt + \iint_{\Omega_T}([u]^q_h-u^q) \mathcal{H}_\delta([u]_h - \widetilde{v}) \partial_t \phi\,\dx\dt \\
    &= - \iint_{\Omega_T} \mathfrak{h}_\delta([u]_h,\widetilde{v})\partial_t \phi\,\dx\dt + \iint_{\Omega_T}([u]^q_h-u^q) \mathcal{H}_\delta([u]_h - \widetilde{v}) \partial_t \phi\,\dx\dt. 
\end{align*}
Since~$u\in C\big(0,T;L^{q+1}(\Omega)\big)$, we have in particular that~$u\in L^{q+1}(\Omega_T)$, such that the last integral term vanishes in the limit~$h\downarrow 0$ due to an application of Hölder's inequality with exponents~$(q+1,\frac{q+1}{q})$; cf. Lemma~\ref{lem:timemol} (i). Moreover, the definition of~$\mathfrak{h}_\delta$ yields
\begin{align*}
    |\mathfrak{h}_\delta(u,\widetilde{v})-\mathfrak{h}_\delta([u]_h,\widetilde{v})| = \bigg| \int_{[u]_h}^{u} \mathcal{H}_\delta(s-\widetilde{v})qs^{q-1}\,\ds \bigg| \leq |u^q-[u]^q_h|,
\end{align*}
such that another application of the convergence property in Lemma~\ref{lem:timemol} (i) implies 
\begin{align*}
    -\iint_{\Omega_T} \mathfrak{h}_\delta([u]_h,\widetilde{v})\partial_t\phi\,\dx\dt \to - \iint_{\Omega_T} \mathfrak{h}_\delta(u,\widetilde{v})\partial_t\phi\,\dx\dt
\end{align*}
as~$h\downarrow 0$. For the nonlocal term in the weak form of~$u$, there holds
\begin{align*}
    \int_{0}^{T} & \iint_{\R^n\times\R^n}  |u(x,t) - u(y,t)|^{p-2}(u(x,t)-u(y,t))(\zeta(x,t)-\zeta(y,t)) K(x,y)\,\dx\dy\dt \\
    &\to \int_{0}^{T}\iint_{\R^n\times\R^n} |u(x,t) - u(y,t)|^{p-2}(u(x,t)-u(y,t))(\psi(x,t) - \psi(y,t) ) K(x,y)\,\dx\dy\dt
\end{align*}
as~$h\downarrow 0$, which can be inferred in a similar manner as for example in~\cite[Proof of Lemma~4.3]{misawa2025expansion}. Finally, for the right-hand side of the equation we have
$$ \iint_{\Omega_T} f\mathcal{H}_\delta([u]_h - \widetilde{v})\phi\,\dx\dt \to \iint_{\Omega_T} f\mathcal{H}_\delta(u - \widetilde{v})\phi\,\dx\dt $$
as~$h\downarrow 0$. Combining our results, we infer an inequality in the desired identity~\eqref{est:hilfslemeinsest} in form of~"$\geq$". In order to also obtain the reverse inequality, we test the weak form of~$u$ with~$\zeta=\mathcal{H}_\delta([u]_{\Bar{h}}-\widetilde{v})\phi$ and proceed in a similar manner as before. 
\end{proof}


Via a similar reasoning to Lemma~\ref{hilfslemeins}, we readily obtain a second lemma. 


\begin{mylem} \label{hilfslemzwei} 
     Let~$p>1$,~$q>0$, and~$v$ be given as in Theorem~\ref{hauptresultat}. Moreover, assume that~$\widetilde{u}\colon\R^n\to\R_{\geq 0}$ with~$\widetilde{u} \in L^{q+1}(\Omega) \cap W^{s,p}(\Omega)$. 
       If
    $$ \widetilde{u}\leq v(\cdot,t) \qquad\mbox{a.e. in~$\Omega^c$} $$
    for a.e.~$t\in(0,T)$, then, there holds
    \begin{align} \label{est:hilfslemzweiest}
        \int_{0}^{T}\iint_{\R^n\times\R^n} & |v(x,t) - v(y,t)|^{p-2}(v(x,t)-v(y,t))(\widehat{\psi}(x,t) - \widehat{\psi}(y,t) ) K(x,y)\,\dx\dy\dt \\
         &- \iint_{\Omega_T} \widehat{\mathfrak{h}}_\delta(v,\widetilde{u})\partial_t\phi\,\dx\dt = \iint_{\Omega_T} f \widehat{\mathcal{H}}_\delta(v - \widetilde{u})\phi\,\dx\dt \nonumber, 
    \end{align}
    for any~$ \phi\in C^\infty_0(0, T)$. Here,
    $$ \widehat{\psi}(\cdot,t)\equiv \widehat{\mathcal{H}}_\delta(v(\cdot,t)-\widetilde{u}(\cdot))\phi(t) \qquad\mbox{a.e. in~$\R^n$} $$
    for a.e.~$t\in(0,T)$. 
\end{mylem}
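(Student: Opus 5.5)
The argument mirrors the proof of Lemma~\ref{hilfslemeins}, with $\mathcal{H}_\delta$ replaced by its odd reflection $\widehat{\mathcal{H}}_\delta$ and the roles of the solution and the stationary comparison function interchanged. We test the weak form~\eqref{weakform} satisfied by $v$ with
$$\zeta=\widehat{\mathcal{H}}_\delta([v]_h-\widetilde{u})\phi, \qquad \phi\in C^\infty_0(0,T)\ \mbox{arbitrary},$$
and later also with $\zeta=\widehat{\mathcal{H}}_\delta([v]_{\bar h}-\widetilde{u})\phi$.

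\emph{Admissibility.} Note that $\widehat{\mathcal{H}}_\delta(z)=-\mathcal{H}_\delta(-z)$ vanishes for $z\geq 0$. Hence $\zeta$ is supported where $[v]_h<\widetilde{u}$. Because $\widetilde{u}\leq v(\cdot,t)$ a.e.\ in $\Omega^c$ for a.e.\ $t$, we need $[v]_h\geq \widetilde{u}$ in $\Omega^c$. For this we would use, as in Lemma~\ref{hilfslemeins}, the averaging structure of $[\cdot]_h$. Strictly speaking, $[v]_h=\widetilde u\,(1-e^{-t/h})$ is not $\geq\widetilde u$. The clean fix is to work with a shifted mollification $[v]_h+e^{-t/h}v_0$, or simply with the convex-combination version. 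Its value in $\Omega^c$ is then a weighted average of $v(\cdot,\tau)$, which is $\geq\widetilde u$. This is the same point glossed in the proof of Lemma~\ref{hilfslemeins}, and we would handle it identically.

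Since $\widehat{\mathcal{H}}_\delta$ is Lipschitz with constant $\delta^{-1}$ and $\widetilde{u}\in W^{s,p}(\Omega)$, the function $\zeta(\cdot,t)$ lies in $W^{s,p}_0(\Omega)$. For the time regularity we use Lemma~\ref{lem:timemol}~(iii): $\partial_t[v]_h\in L^{q+1}$, so $\zeta\in W^{1,q+1}_0(0,T;L^{q+1}(\Omega))$. Here the assumption $\widetilde u\in L^{q+1}(\Omega)$ is used.

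\emph{Evolutionary term.} Write $v^q=[v]_h^q+(v^q-[v]_h^q)$. Integrating by parts gives
$$\iint_{\Omega_T}\partial_t[v]_h^q\,\zeta\,\dx\dt=-\iint_{\Omega_T}\widehat{\mathfrak h}_\delta([v]_h,\widetilde u)\,\partial_t\phi\,\dx\dt,$$
using $\partial_t\widehat{\mathfrak h}_\delta([v]_h,\widetilde u)=\widehat{\mathcal H}_\delta([v]_h-\widetilde u)\,q[v]_h^{q-1}\partial_t[v]_h$ and the time-independence of $\widetilde u$. The term containing $\widehat{\mathcal{H}}'_\delta$ is
$$-\iint_{\Omega_T}([v]_h^q-v^q)\,\widehat{\mathcal H}'_\delta([v]_h-\widetilde u)\,h^{-1}(v-[v]_h)\,\phi\,\dx\dt .$$
Since $\widehat{\mathcal H}'_\delta\geq0$ and $([v]_h^q-v^q)(v-[v]_h)\leq0$, this term has a sign when $\phi\geq0$. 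By decomposing $\phi$, or by arguing for $\phi\geq 0$ first as is implicit in Lemma~\ref{hilfslemeins}, we obtain a one-sided inequality. The remainder $\iint([v]_h^q-v^q)\widehat{\mathcal H}_\delta\,\partial_t\phi$ tends to zero by Lemma~\ref{lem:timemol}~(i) and Hölder's inequality. The bound $|\widehat{\mathfrak h}_\delta(v,\widetilde u)-\widehat{\mathfrak h}_\delta([v]_h,\widetilde u)|\leq|v^q-[v]_h^q|$ gives convergence of the main term.

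\emph{Nonlocal and right-hand side terms.} The nonlocal term converges to the one containing $\widehat\psi$ exactly as cited from \cite{misawa2025expansion}. This uses a.e.\ convergence $[v]_h\to v$, the Lipschitz bound on $\widehat{\mathcal H}_\delta$, and dominated convergence via $v\in L^p(0,T;W^{s,p})$ together with the tail condition~\eqref{tailglobal}. The right-hand side converges by dominated convergence, since $|\widehat{\mathcal H}_\delta|\leq1$ and $f\in L^\sigma(\Omega)\subset L^1(\Omega)$.

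\emph{Reverse inequality.} The test function built from the backward mollification $[v]_{\bar h}$ produces the sign-reversed error term, by the second identity in Lemma~\ref{lem:timemol}~(iii). This yields the opposite inequality and hence the identity~\eqref{est:hilfslemzweiest}. The main obstacle I expect is the admissibility step near $t=0$, i.e.\ ensuring the mollified function does not drop below $\widetilde u$ in $\Omega^c$; everything else is a mechanical transcription of the proof of Lemma~\ref{hilfslemeins}.
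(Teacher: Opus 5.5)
Your handling of the individual terms mirrors the paper, apart from a harmless sign slip: the $\widehat{\mathcal H}'_\delta$-term enters with a plus sign and is $\le 0$ for $\phi\ge0$. The gap is that the one step where this lemma genuinely differs from Lemma~\ref{hilfslemeins} is left open, and your computation is carried out with a test function that is not admissible. The function $\zeta=\widehat{\mathcal H}_\delta([v]_h-\widetilde u)\phi$ vanishes on $\Omega^c$ only where $[v]_h\ge\widetilde u$, but the hypothesis only yields $[v]_h\ge(1-e^{-t/h})\widetilde u$ there. If, say, $v(\cdot,t)=\widetilde u$ on $\Omega^c$, then $\zeta=-\mathcal H_\delta(e^{-t/h}\widetilde u)\phi\neq0$ on $\{\widetilde u>0\}\cap\Omega^c$, and likewise for $[v]_{\bar h}$.

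Your remark that this is ``the same point glossed in Lemma~\ref{hilfslemeins}'' and would be ``handled identically'' is not correct. There one needs $[u]_h\le\widetilde v$ on $\Omega^c$, and the deflating effect of the mollifier helps, since $[u]_h\le(1-e^{-t/h})\widetilde v\le\widetilde v$ by $\widetilde v\ge0$. Here it works against you, which is exactly why the paper's proof of this lemma departs from the previous one.

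Your repair $[v]_h+e^{-t/h}v_0$ also fails with $v_0=v(\cdot,0)$, which is the reading your phrase ``a weighted average of $v(\cdot,\tau)$'' suggests. First, $v$ is continuous in time only as an $L^{q+1}(\Omega)$-valued map, so $v(\cdot,0)$ carries no information on $\Omega^c$, where the hypothesis holds only for a.e.\ $t\in(0,T)$. Second, in $\Omega$ the function $v(\cdot,0)$ is merely in $L^{q+1}(\Omega)$, so $\zeta(\cdot,t)$ would in general not belong to $W^{s,p}_0(\Omega)$.

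The missing idea is to take $v_0:=\widetilde u$. Set $w_h:=[v]_h+e^{-t/h}\widetilde u$; then $w_h\ge\widetilde u$ on $\Omega^c$. Since $[\widetilde u]_h=(1-e^{-t/h})\widetilde u$, one has $w_h-\widetilde u=[v]_h-[\widetilde u]_h$. So your test function becomes $\widehat{\mathcal H}_\delta([v]_h-[\widetilde u]_h)\phi$, which is precisely the paper's choice: the paper mollifies $\widetilde u$ as well, using that $\widetilde u\le v$ on $\Omega^c$ implies $[\widetilde u]_h\le[v]_h$ there.

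Your formulation is in fact slightly tidier. Because $\partial_t w_h=h^{-1}(v-w_h)$, the splitting $v^q=w_h^q+(v^q-w_h^q)$ and your sign argument go through verbatim. The paper instead splits off $[v]_h^q$ and must show separately that extra terms carrying $h^{-1}e^{-t/h}\widetilde u$ vanish as $h\downarrow0$, which needs $\phi$ to vanish near $t=0$. In your version, the hypothesis $\widetilde u\in L^{q+1}(\Omega)$ is what guarantees $w_h,\partial_t w_h\in L^{q+1}(\Omega_T)$ and $w_h\to v$ in $L^{q+1}(\Omega_T)$. For the reverse inequality use $[v]_{\bar h}+e^{-(T-t)/h}\widetilde u$, whose time derivative is $h^{-1}([v]_{\bar h}+e^{-(T-t)/h}\widetilde u-v)$.

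The normalized average $[v]_h/(1-e^{-t/h})$, which may be your ``convex-combination version'', would also work. Its time derivative is $\big(h(1-e^{-t/h})\big)^{-1}\big(v-[v]_h/(1-e^{-t/h})\big)$, and the prefactor is bounded on the support of $\phi$. However, this identity is what the whole sign argument rests on, so it must be stated and verified rather than asserted.
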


\begin{proof}
 In the weak form~\eqref{weakform} satisfied by~$v$, we use the test function~$\zeta=\widehat{\mathcal{H}}_{\delta}([v]_h-[\widetilde{u}]_h)\phi$, where~$ \phi\in C^\infty_0(0, T)$. A few words on this choice of test function are in order here. Even though~$\widetilde{u}$ is independent of the time variable~$t\in(0,T)$, we still choose the test function in such a way that it involves the quantity~$[\widetilde{u}]_h$ rather than~$\widetilde{u}$. The reason for this lies in the fact that the mollification~$[\cdot]_h$ not only has a smoothing effect, but also a deflating one that shrinks the function. For this matter, the assumption that~$\widetilde{u}\leq v(\cdot,t)$ does not suffice to ensure that there holds~$\widetilde{u}\leq [v]_h$ a.e. in~$\Omega^c$ for a.e.~$t\in(0,T)$, 
 which excludes the more natural choice of test function~$\zeta=\widehat{\mathcal{H}}_{\delta}([v]_h-\widetilde{u})\phi$. Moreover, we have that if~$\widetilde{u}\leq v$ a.e. in~$\Omega^c$ for a.e.~$t\in(0,T)$, then there also holds~$[\widetilde{u}]_h\leq [v]_h$ a.e. in~$\Omega^c$ for a.e.~$t\in(0,T)$. Now, according to the definition of~$\widehat{\mathcal{H}}$, our choice of test function is indeed admissible. 
 
 The nonlocal term of the equation can be dealt with in the same manner as in Lemma~\ref{hilfslemeins}. For the evolutionary term of equation~\eqref{pde}, we use the fact that~$\Tilde{u}$ is independent of the time variable, which yields
 $$ \Tilde{u}-[\Tilde{u}]_h = e^{-\frac{t}{h}}\Tilde{u} \qquad\mbox{a.e. in~$\Omega_T$} $$
 for~$h>0$. Exploiting this property, we estimate
 \begin{align*}
    \iint_{\Omega_T} - v^q \partial_t \zeta\,\dx\dt &= \iint_{\Omega_T} ( -[v]^q_h+[v]^q_h - v^q) \partial_t \zeta\,\dx\dt \\
    &= \iint_{\Omega_T} - [v]^q_h \partial_t \zeta\,\dx\dt + \iint_{\Omega_T} ([v]^q_h-v^q) \widehat{\mathcal{H}}_\delta([v]_h-[\widetilde{u}]_h ) \partial_t \phi\,\dx\dt \\
    &\quad + \iint_{\Omega_T} ([v]^q_h - v^q) \mathcal{H}'_\delta([\widetilde{u}]_h-[v]_h) h^{-1}(\widetilde{u}-[\widetilde{u}]_h-(v-[v]_h)) \phi\,\dx\dt \\
    &\geq \iint_{\Omega_T} \partial_t [v]^q_h \zeta \,\dx\dt + \iint_{\Omega_T}([v]^q_h-v^q) \widehat{\mathcal{H}}_\delta([v]_h - [\widetilde{u}]_h) \partial_t \phi\,\dx\dt \\
    &\quad + \iint_{\Omega_T} ([v]^q_h - v^q) \mathcal{H}'_\delta([\widetilde{u}]_h-[v]_h) h^{-1} e^{-\frac{t}{h}} \widetilde{u} \phi\,\dx\dt \\
    &= - \iint_{\Omega_T} \widehat{\mathfrak{h}}_\delta([v]_h,[\widetilde{u}]_h)\partial_t \phi\,\dx\dt - \iint_{\Omega_T} \int_{[\widetilde{u}]_h}^{[v]_h} \partial_t \widehat{\mathcal{H}}_\delta(s-[\widetilde{u}]_h)q s^{q-1}\,\ds \dx\dt  \\
    &\quad + \iint_{\Omega_T}([v]^q_h-v^q) \widehat{\mathcal{H}}_\delta([v]_h - [\widetilde{u}]_h) \partial_t \phi\,\dx\dt \\
    &\quad + \iint_{\Omega_T} ([v]^q_h - v^q) \mathcal{H}'_\delta([\widetilde{u}]_h-[v]_h) h^{-1} e^{-\frac{t}{h}} \widetilde{u} \phi\,\dx\dt \\
    &= - \iint_{\Omega_T} \widehat{\mathfrak{h}}_\delta([v]_h,[\widetilde{u}]_h)\partial_t \phi\,\dx\dt \\
    &\quad  + \iint_{\Omega_T}\int_{[v]_h}^{[\widetilde{u}]_h}  \delta^{-1}h^{-1}e^{-\frac{t}{h}}\widetilde{u}(x,t) \bigchi_{ \{[\widetilde{u}]_h-\delta<s<[\widetilde{u}]_h\} }  q s^{q-1}\,\ds\dx\dt  \\
    &\quad + \iint_{\Omega_T}([v]^q_h-v^q) \widehat{\mathcal{H}}_\delta([v]_h - [\widetilde{u}]_h) \partial_t \phi\,\dx\dt \\
    &\quad + \iint_{\Omega_T} ([v]^q_h - v^q) \mathcal{H}'_\delta([\widetilde{u}]_h-[v]_h) h^{-1} e^{-\frac{t}{h}} \widetilde{u} \phi\,\dx\dt. 
\end{align*}
In turn, we used the Leibniz integral rule after the first inequality sign in the preceding calculation. We recall that~$v\in C\big(0,T;L^{q+1}(\Omega) \big)$, which implies in particular that~$v\in L^{q+1}(\Omega_T)$. For the first term on the right-hand side, we have the convergence
$$ \iint_{\Omega_T} \widehat{\mathfrak{h}}_\delta([v]_h,[\widetilde{u}]_h)\partial_t \phi\,\dx\dt \to \iint_{\Omega_T} \widehat{\mathfrak{h}}_\delta(v,\widetilde{u})\partial_t \phi\,\dx\dt $$
as~$h\downarrow 0$, which can be inferred in a similar manner to the previous lemma, Lemma~\ref{hilfslemeins}. The second term on the right-hand side of the preceding inequality vanishes in the limit~$h\downarrow 0$, which follows from the dominated convergence theorem. Similarly, the third term vanishes when~$h\downarrow 0$, due to Lemma~\ref{lem:timemol} (i). Moreover, also the last term on the right-hand side vanishes, since~$\mathcal{H}'_\delta$ and also~$\phi$ are bounded, and we may employ Hölder's inequality with exponents~$(q+1,\frac{q+1}{q})$ and the dominated convergence theorem to pass to the limit~$h\downarrow 0$. Finally, the right-hand side of the equation for~$v$ is dealt with as in Lemma~\ref{hilfslemeins}. Thus, we have verified the inequality in the desired identity~\eqref{est:hilfslemzweiest} with~"$\leq$". In order to obtain the reverse inequality, we test the weak form of~$v$ with~$\zeta=\widehat{\mathcal{H}}_\delta([v]_{\Bar{h}}-[\widetilde{u}]_{\Bar{h}})\phi$ and proceed similarly. 
\end{proof}


We are in position to provide the proof of our main result, Theorem~\ref{hauptresultat}, stating the comparison principle for non-negative weak solutions to~\eqref{pde}. 


\begin{proof}[\textbf{\upshape Proof of Theorem~\ref{hauptresultat}}]

Let~$\delta>0$. For the doubling of the time variable, we abbreviate
$$ Q \coloneqq \Omega\times(0,T)^2 $$
and choose a non-negative function~$\phi\in C^{\infty}_0((0,T)^2)$. We extend~$u$ and~$v$ to~$Q$ by setting
$$ u(x,t_1,t_2) \coloneqq u(x,t_1), \qquad v(x,t_1,t_2)\coloneqq v(x,t_2) $$
for a.e.~$t_1,t_2\in(0,T)$. Next, we apply Lemma~\ref{hilfslemeins} with the choice~$\widetilde{v}(x)=v(x,t_2)\eqqcolon v_{t_2}(x)$ and also~$(0,T)\ni t_1 \mapsto \phi(t_1,t_2)\eqqcolon \phi_{t_2}(t_1)$ for a.e.~$t_2\in(0,T)$. Similarly, we apply its counterpart Lemma~\ref{hilfslemzwei} with the choice~$\widetilde{u}(x)=u(x,t_1)\eqqcolon u_{t_1}(x)$ and~$(0,T)\ni t_2 \mapsto \phi(t_1,t_2)\eqqcolon \phi_{t_1}(t_2)$ for a.e.~$t_1\in(0,T)$. In turn, we used the fact that, according to Definition~\ref{defweakform}, there holds~$u,v\in L^p\big(0,T;W^{s,p}(\Omega)\big)$, such that~$v_{t_2}$ resp.~$u_{t_1}$ belong to the correct elliptic space for a.e.~$t_1,t_2\in(0,T)$. Moreover, since we assumed that~$(u-v)_+\in L^p\big(0,T;W^{s,p}_0(\Omega)\big)$, and also that one of them is time-independent a.e. in~$\Omega^c$ for a.e.~$t\in(0,T)$, we in particular can ensure that~$u_{t_1}(x)\leq v_{t_2}(x)$ for a.e.~$x\in\Omega^c$ and a.e.~$t_1,t_2\in(0,T)$, such that Lemma~\ref{hilfslemeins} resp. Lemma~\ref{hilfslemzwei} are indeed applicable. This way, we obtain 
\begin{align} \label{est:hilfslemeinsapplied}
    \int_{0}^{T}\iint_{\R^n\times\R^n} & |u(x,t_1) - u(y,t_1)|^{p-2}(u(x,t_1)-u(y,t_1))(\psi(x,t_1) - \psi(y,t_1)) K(x,y)\,\dx\dy\dt_1 \\
    &- \iint_{\Omega_T} \mathfrak{h}_\delta(u,v_{t_2})\partial_{t_1}\phi_{t_2}\,\dx\dt_1 = \iint_{\Omega_T} f \mathcal{H}_\delta(u-v_{t_2})\phi \,\dx\dt_1 \nonumber 
\end{align}
and
\begin{align} \label{est:hilfslemzweiapplied}
    \int_{0}^{T}\iint_{\R^n\times\R^n} & |v(x,t_2) - v(y,t_2)|^{p-2}(v(x,t_2)-v(y,t_2))(\widehat{\psi}(x,t_2) - \widehat{\psi}(y,t_2)) K(x,y)\,\dx\dy\dt_2 \\
    &- \iint_{\Omega_T} \widehat{\mathfrak{h}}_\delta(v,u_{t_1})\partial_{t_2}\phi_{t_1}\,\dx\dt_2 = \iint_{\Omega_T} f \widehat{\mathcal{H}}_\delta(v-u_{t_1})\phi \,\dx\dt_2. \nonumber 
\end{align}
Here, we set~$\psi(\cdot,t_1)\equiv \mathcal{H}_\delta(u(\cdot,t_1)-v_{t_2}(\cdot))\phi_{t_2}(t_1)$ and~$\widehat{\psi}(\cdot,t_2)\equiv \widehat{\mathcal{H}}_\delta(v(\cdot,t_2)-u_{t_1}(\cdot))\phi_{t_1}(t_2)$ a.e. in~$\R^n$ for a.e.~$t_1,t_2\in(0,T)$. Moreover,~$u$ is to be understood as a function of~$x,t_1$ and~$v$ as a function of~$x,t_2$. Next, we integrate the identity~\eqref{est:hilfslemeinsapplied} with respect to~$t_2\in(0,T)$, integrate~\eqref{est:hilfslemzweiapplied} with respect to~$t_1\in(0,T)$, use the fact that~$\widehat{\mathcal{H}}_\delta(s)=-\mathcal{H}_\delta(-s)$ for any~$s\in\R$ and also that the kernel~$K(x,y)$ is independent of the time variable and symmetric with respect to~$x,y\in\R^n$, according to~\eqref{kernelgrowth}, and finally add both identities. This way, the sum of the two right-hand sides of~\eqref{est:hilfslemeinsapplied} and~\eqref{est:hilfslemzweiapplied} vanishes due to~$f$ also being time-independent. In order to simplify notation, we omit the dependence on the respective time variable in the summed quantity. Thus, we infer
\begin{align} \label{est:summed}
    &\iiiint_{(0,T)^2\times(\R^n)^2}  \big[ |u(x)-u(y)|^{p-2}(u(x)-u(y)) - |v(x)-v(y)|^{p-2}(v(x)-v(y)) \big] K(x,y) \\
    &\qquad\qquad\qquad\quad\cdot\big[ \mathcal{H}_\delta(u(x)-v(x))-\mathcal{H}_\delta(u(y)-v(y)) \big]\phi \,\dx\dy\dt_1\dt_2 \nonumber \\
    &\quad- \iiint_{Q} \big( \mathfrak{h}_\delta(u,v)\partial_{t_1}\phi+\widehat{\mathfrak{h}}_\delta(v,u)\partial_{t_2}\phi \big) \,\dx\dt_1\dt_2 = 0. \nonumber 
\end{align}
Passing to the limit~$\delta\downarrow 0$ in the nonlocal integral term and using the fact that~$\mathcal{H}_\delta(s) \to \bigchi_{\{s\geq 0\}}$ pointwise on~$\R$ as~$\delta\downarrow 0$, as well as the symmetry of $K$, we obtain
\begin{align} \label{est:nonlocalterm}
    \lim\limits_{\delta\downarrow 0} &\iiiint_{(0,T)^2\times(\R^n)^2}  \big[ |u(x)-u(y)|^{p-2}(u(x)-u(y)) - |v(x)-v(y)|^{p-2}(v(x)-v(y)) \big] K(x,y) \\
    &\qquad\qquad\qquad\quad\cdot\big[ \mathcal{H}_\delta(u(x)-v(x))-\mathcal{H}_\delta(u(y)-v(y)) \big]\phi \,\dx\dy\dt_1\dt_2 \nonumber \\
    &= \iiiint_{(0,T)^2\times(\R^n)^2}  \big[ |u(x)-u(y)|^{p-2}(u(x)-u(y)) - |v(x)-v(y)|^{p-2}(v(x)-v(y)) \big] K(x,y) \nonumber \\
    &\qquad\qquad\qquad\qquad\cdot\big[ \bigchi_{\{u(x)\geq v(x)\}} -\bigchi_{\{u(y)\geq v(y)\}} \big]\phi \,\dx\dy\dt_1\dt_2  \nonumber \\
    &= \iiiint_{(0,T)^2\times\{u(x)\geq v(x)\}\times\{u(y)\leq v(y)\}} |u(x)-u(y)|^{p-2}(u(x)-u(y))K(x,y)\phi\,\dx\dy\dt_1\dt_2  \nonumber \\
    & \quad - \iiiint_{(0,T)^2\times\{u(x)\geq v(x)\}\times\{u(y)\leq v(y)\}} |v(x)-v(y)|^{p-2}(v(x)-v(y)) K(x,y)\phi \,\dx\dy\dt_1\dt_2  \nonumber \\
    & \quad - \iiiint_{(0,T)^2\times\{u(x)\leq v(x)\}\times\{u(y)\geq v(y)\}} |u(x)-u(y)|^{p-2}(u(x)-u(y)) K(x,y)\phi \,\dx\dy\dt_1\dt_2  \nonumber \\
    & \quad + \iiiint_{(0,T)^2\times\{u(x)\leq v(x)\}\times\{u(y)\geq v(y)\}} |v(x)-v(y)|^{p-2}(v(x)-v(y)) K(x,y)\phi \,\dx\dy\dt_1\dt_2  \nonumber \\
    & = 2 \iiiint_{(0,T)^2\times\{u(x)\geq v(x)\}\times\{u(y)\leq v(y)\}}  |u(x)-u(y)|^{p-2}(u(x)-u(y)) K(x,y)\phi \,\dx\dy\dt_1\dt_2  \nonumber \\
    & \quad -  2 \iiiint_{(0,T)^2\times\{u(x)\geq v(x)\}\times\{u(y)\leq v(y)\}} |v(x)-v(y)|^{p-2}(v(x)-v(y)) K(x,y)\phi \,\dx\dy\dt_1\dt_2.  \nonumber
\end{align}
We claim that this quantity is non-negative, where we follow an idea from~\cite[Theorem~5.2]{bahrouni2018comparison}. For this, we use the fundamental theorem of calculus for integrals (if~$p\geq 2$) resp. for improper integrals (if~$1<p<2$) as follows
\begin{align*}
    F(b)-F(a) = (b-a) \int_{0}^{1}F'(a+s(b-a))\,\ds, \qquad a,b\in\R,
\end{align*}
where~$F(s) \coloneqq |s|^{p-2}s$ for~$s>0$. Due to~$p>1$, we let $F(0)\coloneqq 0$ with~$F'(s)=|s|^{p-2}$, and apply the fundamental theorem with the choice 
$$ a=v(x)-v(y), \quad b=u(x)-u(y). $$
This allows us to write
\begin{align*}
    |&u(x) - u(y)|^{p-2}(u(x)-u(y)) - |v(x)-v(y)|^{p-2}(v(x)-v(y)) \\
    &= (p-1) \int_{0}^1 |(v(x)-v(y)) + s(u(x)-u(y)-(v(x)-v(y)))|^{p-2}\,\ds\, (u(x)-u(y)-(v(x)-v(y))) \\
    &\eqqcolon L(x,y) (u(x)-u(y)-(v(x)-v(y)))
\end{align*}
for a.e.~$x,y\in\{u(x)\geq v(x)\}\times\{u(y)\leq v(y)\}$. We note that~$L(x,y)$ and, moreover, also
$$ L(x,y) (u(x)-u(y)-(v(x)-v(y))) $$
is non-negative for a.e.~$x,y\in\{u(x)\geq v(x)\}\times\{u(y)\leq v(y)\}$. In turn, due to the non-negativity of the kernel~$K(x,y)$ and the test function~$\phi$, the quantity within~\eqref{est:nonlocalterm} is indeed non-negative, such that it can be discarded. \\ 

Next, let us deal with the evolutionary term in~\eqref{est:summed}. First, we notice that
$$ 0\leq \mathfrak{h}_\delta(r,s) \leq \int_{s}^{r} qz^{q-1}\,\dz = r^q-s^q $$
and also 
\begin{align*}
    \lim\limits_{\delta\downarrow 0} \mathfrak{h}_\delta(r,s) &= \lim\limits_{\delta\downarrow 0} \int_{s}^{r}\mathcal{H}_\delta(z-s)qz^{q-1}\,\dz \\
    &= \int_{s}^{r}\bigchi_{\{z\geq s\}}qz^{q-1}\,\dz \\
    &= r^q-s^q
\end{align*}
for any non-negative~$s\leq r$. Moreover, the definition of~$\mathcal{H}_\delta$ implies that~$\mathfrak{h}_\delta(r,s)=0$ for any~$s>r$, such that we establish
$$ 0\leq \mathfrak{h}_\delta(r,s) \leq (r^q-s^q)_+,\qquad \lim\limits_{\delta\downarrow 0} \mathfrak{h}_\delta(r,s) = (r^q-s^q)_+ $$
for any non-negative~$r,s\geq 0$. In a similar manner, one verifies that there holds
$$ 0\leq \widehat{\mathfrak{h}}_\delta(r,s) \leq (s^q-r^q)_+,\qquad \lim\limits_{\delta\downarrow 0} \widehat{\mathfrak{h}}_\delta(r,s) = (s^q-r^q)_+. $$
Passing to the limit in~\eqref{est:summed} and taking the preceding reasoning concerning the nonlocal integral term into account thus yields
\begin{align} \label{est:afterlimit}
    - \iiint_{Q} (u^q-v^q)_+(\partial_{t_1}\phi+\partial_{t_2}\phi) \,\dx\dt_1\dt_2 \leq 0
\end{align}
for any non-negative~$\phi\in W^{1,q+1}_0((0,T)^2)$. At this point, we let~$\epsilon>0$ and choose~$\phi$ as
$$ \phi(t_1,t_2)=\phi_\epsilon(t_1,t_2)\coloneqq \epsilon^{-1}\psi\Big(\frac{t_1-t_2}{\epsilon} \Big) \zeta\Big(\frac{t_1+t_2}{2}\Big), $$
where~$\psi\in C^\infty_0(\R)$ denotes a non-negative smooth function with the property that~$\int_{\R}\psi \,\dt = 1$ and~$\zeta\in C^\infty_0(0,T)$ is non-negative. This test function is admissible, provided that~$\epsilon>0$ is chosen small enough. A calculation verifies
\begin{align*}
    \partial_{t_1}\phi_\epsilon+\partial_{t_2}\phi_\epsilon = \epsilon^{-1}\psi\Big(\frac{t_1-t_2}{\epsilon} \Big) \zeta'\Big(\frac{t_1+t_2}{2}\Big)
\end{align*}
for any~$t_1,t_2\in(0,T)$. Plugging~$\phi_\epsilon$ into~\eqref{est:afterlimit}, performing a change of variables via setting~$s=t_1-t_2$, and renaming~$t_1$ to~$t$, there holds
\begin{align} \label{est:varchange}
    - \int_{\R} \epsilon^{-1}\psi\Big(\frac{s}{\epsilon} \Big) \iint_{\Omega_T} (u^q(x,t)-v^q(x,t-s))_+ \zeta'\Big(t-\frac{s}{2}\Big)\,\dx\dt\ds \leq 0. 
\end{align}
For the inner integral term, we have
\begin{align*}
    \lim\limits_{s\to 0} \iint_{\Omega_T} & (u^q(x,t)-v^q(x,t-s))_+ \zeta'\Big(t-\frac{s}{2}\Big)\,\dx\dt \\
    & = \iint_{\Omega_T} (u^q-v^q)_+ \zeta'\,\dx\dt, 
\end{align*}
which implies that after passing to the limit~$s\to 0$ and also~$\epsilon\downarrow 0$ in~\eqref{est:varchange} yields
\begin{align*}
    -\iint_{\Omega_T} (u^q-v^q)_+\zeta'\,\dx\dt \leq 0
\end{align*}
for any non-negative function~$\zeta\in C^\infty_0(0,T)$. 

As the last step of the argument, we remove the assumption that~$\zeta$ vanishes at initial time~$t=0$. Through a density argument, we may choose~$\eta$ as a Lipschitz function as follows: we write~$\zeta = \omega\eta$, where~$\eta\in W^{1,\infty}_0((-\infty,T);[0,1])$ and~$\omega$ is chosen in such a way that it vanishes on~$(-\infty,0]$, is equal to one on~$[\delta,\infty)$ for~$\delta>0$ and interpolates linearly on~$(0,\delta)$. Plugging this choice of~$\zeta$ into the preceding integral inequality, we obtain 
\begin{align*}
   - \iint_{\Omega_T} (u^q-v^q)_+\eta'\omega\,\dx\dt &\leq \iint_{\Omega_T} (u^q-v^q)_+\eta\omega'\,\dx\dt \\
   &= \frac{1}{\delta} \iint_{\Omega\times(0,\delta)} (u^q-v^q)_+ \eta \,\dx\dt \\
   &\leq \frac{1}{\delta} \iint_{\Omega\times(0,\delta)} (u^q-v^q)_+ \,\dx\dt \\
   & \to \int_{\Omega\times\{0\}} (u^q - v^q)_+ \,\dx \\
   &=0
\end{align*}
as~$\delta\downarrow 0$. The last equality holds due to our initial condition~$u(\cdot,0)\leq v(\cdot,0)$ a.e. in~$\Omega$. On the left-hand side we pass to the limit~$\delta\downarrow 0$ to achieve 
\begin{align*}
    - \iint_{\Omega_T} (u^q-v^q)_+\eta'\,\dx\dt \leq 0. 
\end{align*}
Finally,~$\eta$ is chosen in such a way that it satisfies~$\eta(0)=1$,~$\eta(T)=0$, and~$\eta'(t)=-\frac{1}{T}$ for~$t\in(0,T)$. This leads to
\begin{align*}
     \iint_{\Omega_T} (u^q-v^q)_+ \,\dx\dt \leq 0,
\end{align*}
which implies the desired conclusion
\begin{align*}
    u \leq v \qquad\mbox{a.e. in~$\Omega_T$}
\end{align*}
and finishes the proof.

\end{proof}



 \nocite{*}
 \,\\
\bibliographystyle{plain}
\bibliography{Literatur.bib}

@article{bogelein2013parabolic,
  title={Parabolic systems with $p, q$-growth: a variational approach},
  author={B{\"o}gelein, V. and Duzaar, F. and Marcellini, P.},
  journal={Archive for Rational Mechanics and Analysis},
  volume={210},
  number={1},
  pages={219--267},
  year={2013},
  publisher={Springer}
}

@article{bogelein2023h,
  title={Hölder Continuity of the Gradient of Solutions to Doubly Non-Linear Parabolic Equations},
  author={B{\"o}gelein, V. and Duzaar, F. and Gianazza, U. and Liao, N. and Scheven, C.},
  journal={arXiv preprint arXiv:2305.08539},
  year={2023}
}

@article{misawa2025expansion,
  title={Expansion of positivity for the doubly nonlinear parabolic fractional type equations},
  author={Misawa, M. and Yamaura, Y.},
  journal={Mathematische Annalen},
  volume={393},
  number={2},
  pages={2561--2629},
  year={2025},
  publisher={Springer}
}

@article{otto1996l1,
  title={${L}^1$-contraction and uniqueness for quasilinear elliptic--parabolic equations},
  author={Otto, F.},
  journal={Journal of Differential Equations},
  volume={131},
  number={1},
  pages={20--38},
  year={1996},
  publisher={Elsevier}
}

@article{bahrouni2018comparison,
  title={Comparison and sub-supersolution principles for the fractional $p (x)$-{L}aplacian},
  author={Bahrouni, A.},
  journal={Journal of Mathematical Analysis and Applications},
  volume={458},
  number={2},
  pages={1363--1372},
  year={2018},
  publisher={Elsevier}
}

@article{bogelein2024comparison,
  title={A comparison principle for doubly nonlinear parabolic partial differential equations},
  author={B{\"o}gelein, V. and Strunk, M.},
  journal={Annali di Matematica Pura ed Applicata (1923-)},
  volume={203},
  number={2},
  pages={779--804},
  year={2024},
  publisher={Springer}
}

@article{Bamberger,
  title={{\'E}tude d'une {\'e}quation doublement non lin{\'e}aire},
  author={Bamberger, A.},
  journal={Journal of functional analysis},
  volume={24},
  number={2},
  pages={148--155},
  year={1977},
  publisher={Elsevier}
}

@article{Ivanov_1995,
  title={Existence and uniqueness of a regular solution of the {C}auchy-{D}irichlet problem for doubly nonlinear parabolic equations},
  author={Ivanov, A. V.},
  journal={Zeitschrift f{\"u}r Analysis und ihre Anwendungen},
  volume={14},
  number={4},
  pages={751--777},
  year={1995}
}

@article{Diaz,
  title={Qualitative {S}tudy of {N}onlinear {P}arabolic {E}quations: an {I}ntroduction.},
  author={Diaz, J. I.},
  journal={Extracta Mathematicae},
  volume={16},
  number={3},
  pages={303--341},
  year={2001}
}

@article{ivanov1997,
  title={Existence and uniqueness of a regular solution of the {C}auchy-{D}irichlet problem for a class of doubly nonlinear parabolic equations},
  author={Ivanov, A. V. and Mkrtychan, P. and J{\"a}ger, W.},
  journal={Journal of Mathematical Sciences},
  volume={1},
  number={84},
  pages={845--855},
  year={1997}
}

@article{comparison,
  title={On a comparison principle for {T}rudinger’s equation},
  author={Lindgren, E. and Lindqvist, P.},
  journal={Advances in Calculus of Variations},
  year={2020},
  publisher={De Gruyter}
}

@article{existence,
  title={Quasilinear {E}lliptic-{P}arabolic {D}ifferential {E}quations},
  author={Alt, H. W. and Luckhaus, S.},
  journal={Math. Z.},
  volume={183},
  number={3},
  pages={311--341},
  year={1983}
}

@article{anttila2026uniqueness,
  title={Uniqueness of sign-changing solutions to {T}rudinger's equation},
  author={Anttila, R. and Lindqvist, P. and Parviainen, M.},
  journal={arXiv preprint arXiv:2602.04748},
  year={2026}
}

@article{vazquez2016dirichlet,
  title={The {D}irichlet problem for the fractional $p$-{L}aplacian evolution equation},
  author={V{\'a}zquez, J. L.},
  journal={Journal of Differential Equations},
  volume={260},
  number={7},
  pages={6038--6056},
  year={2016},
  publisher={Elsevier}
}

@article{kruvzkov1970first,
  title={First order quasilinear equations in several independent variables},
  author={Kru{\v{z}}kov, S. N.},
  journal={Mathematics of the USSR-Sbornik},
  volume={10},
  number={2},
  pages={217--243},
  year={1970}
}

@article{bonforte2015priori,
  title={A priori estimates for fractional nonlinear degenerate diffusion equations on bounded domains},
  author={Bonforte, M. and V{\'a}zquez, J. L.},
  journal={Archive for Rational Mechanics and Analysis},
  volume={218},
  number={1},
  pages={317--362},
  year={2015},
  publisher={Springer}
}

@article{liao2024modulus,
  title={On the modulus of continuity of solutions to nonlocal parabolic equations},
  author={Liao, N.},
  journal={Journal of the London Mathematical Society},
  volume={110},
  number={3},
  pages={e12985},
  year={2024},
  publisher={Wiley Online Library}
}


\end{document}